\documentclass[11pt]{amsart}
\usepackage{etoolbox}
\makeatletter
\patchcmd{\@startsection}{\@afterindenttrue}{\@afterindentfalse}{}{}
\makeatother
\usepackage[T1]{fontenc}
\usepackage[utf8]{inputenc}
\usepackage[margin=.95in,a4paper]{geometry}
\usepackage{graphicx} % Required for inserting images
\usepackage[english]{babel}
\usepackage{amsmath, amssymb, amsthm}
\usepackage{url}
\usepackage{subfigure}

\theoremstyle{plain}
\newtheorem{theorem}{Theorem}[section]

\newtheorem{lemma}[theorem]{Lemma}
\newtheorem{conjecture}[theorem]{Conjecture}
\newtheorem*{conjecture*}{Conjecture}
\newtheorem{corollary}[theorem]{Corollary}

\theoremstyle{definition}

\theoremstyle{remark}

\title[On $4$-covers of cubic graphs with two adjacent odd circuits in a $2$-factor]{On 4-covers of cubic graphs with\\two adjacent odd circuits in a 2-factor}

\author[J.~Karab\'a\v{s}]{J\'an Karab\'a\v{s}}
\author[E.~M\'a\v{c}ajov\'a]{Edita M\'a\v{c}ajov\'a}
\address[J.~Karab\'a\v{s}]{FEEIT, Slovak University of Technology, Ilkovi\v{c}ova 3, 84104 Bratislava, Slovakia}
\address[J.~Karab\'a\v{s}]{Mathematical Institute of Slovak Academy of Sciences, \v{D}umbierska 1, 97411 Bansk\'a Bystrica, SLovakia}
\address[E.~M\'a\v{c}ajov\'a]{Faculty of Mathematics, Physics and Informatics, Comenius University, Mlynsk\'a dolina, 84248 Bratislava}
\email[J.~Karab\'a\v{s}]{jan.karabas@stuba.sk}
\email[E.~M\'a\v{c}ajov\'a]{macajova@dcs.fmph.uniba.sk}

\begin{document}
\begin{abstract}
Let $G$ be a cubic graph admitting a $2$-factor consisting of exactly two odd circuits, and let the complementary $1$-factor contain precisely three spokes (along with an arbitrary number of chords). We show that four perfect matchings can cover $G$. As a consequence, $G$ fulfils the 7/5-Conjecture of Alon and Tarsi.
\end{abstract}

\maketitle
\section{Introduction}
Perfect matchings constitute a fundamental substructure of cubic graphs. When a cubic graph is $3$-edge-colourable, each colour class in every proper $3$-edge-colouring corresponds to a perfect matching. By Schonberger's theorem~\cite{LP86, Sch34}, each edge of a bridgeless cubic graph belongs to at least one perfect matching. The minimum number of perfect matchings required to cover the edge set of a graph $G$ is called the perfect matching index of $G$. Berge in the 1970s conjectured that, for every bridgeless cubic graph, five perfect matchings suffice to cover all its edges~\cite{M14}. Until today, this conjecture remains widely open and has been proven only for very specific families of cubic graphs.

In the last fifteen years, much attention has been paid to finding cubic graphs that do not admit a covering of their edge set by four perfect matchings. Deciding whether, for a given cubic graph that does not admit a $3$-edge-colouring, a covering of its edge set by four perfect matchings exists, is NP-complete \cite{EM14, SV24}. Identifying nontrivial cubic graphs (that is, in this context, cubic graphs with cyclic connectivity at least 4 and girth at least 5) that require at least five perfect matchings to cover their edges is thus extremely difficult. Computer search~\cite{BGHM13, BO26} reveals that for up to $38$ vertices, there are only two such graphs: namely, the Petersen graph and a graph on $34$ vertices, so called the `windmill snark'. The discovery of the windmill snark served as a foundational element in constructions of infinite families of graphs, each possessing a perfect matching index of at least $5$ \cite{AKLM16, EM14, MS21}. 

The reason why the family of cubic graphs with perfect matching index at least $5$ is of interest is that it is known that potential counterexamples to famous conjectures, such as the cycle double cover conjecture~\cite{S73}, the 7/5-Conjecture of Alon and Tarsi~\cite{AT85}, and the Fan-Raspaud conjecture~\cite{FR94}, must come from this family~\cite{S15}. On the other hand, the researchers are trying to identify families of graphs as large as possible with perfect matching index at most $4$, see e.g.~\cite{FV08, KMNS-pmi}. Since it is known \cite{S15} that if the perfect matching index of a cubic graph $G$ is at most 4, then it has a cycle cover of total length at most $4/3\cdot |E(G)|$, all these graphs fulfil the 7/5-Conjecture of Alon and Tarsi and also the other two conjectures mentioned above. 

A cubic graph is a \emph{strong snark} if the removal of any edge yields a graph homeomorphic to a cubic graph admitting no proper $3$-edge-colouring. The following conjecture concerning strong snarks was proposed in~\cite{KMNS-berggen}. 

\begin{conjecture*}[\cite{KMNS-berggen}]
Let $G$ be a nontrivial cubic graph with perfect matching index greater than~$4$, different from the Petersen graph. Then $G$ is a strong snark.
%Let $G$ be a nontrivial strong snark. Then either the perfect matching index of $G$ is at most 4 or $G$ is the Petersen graph.
\end{conjecture*}

Here we take a step towards solving this conjecture. Recall that if a graph has a $2$-factor with two odd circuits (and arbitrarily many even circuits), such that there are two adjacent vertices in different odd circuits of the $2$-factor, then the graph is not strong. In the current paper, we investigate the family of cubic graphs having a $2$-factor consisting of exactly two odd
circuits, such that the complementary $1$-factor contains precisely three edges between the circuits (and arbitrarily many edges of the $1$-factor connect vertices inside the two circuits), and prove that such a graph can be covered with four perfect matchings. Thus, removing a single edge between the odd circuits does not yield a $3$-edge-colourable graph. Our proof employs a nonstandard two-phase induction.

%\jk{Oddness of a cubic graph is the smallest number of odd circuits in a $2$-factor of the graph. The oddness of any bridgeless cubic graph is an even number, and a cubic graph has oddness $0$ if and only if it is $3$-edge-colourable. (tato veta sa mi zda zbytocna, oddness v zasade nepotrebujeme)}

\section{Preliminaries}
A graph $G=(V,E,\xi)$ is given by a finite sets $V=V(G)$ of \emph{vertices} and $E = E(G)$ of \emph{edges}, together with the incidency mapping $\xi\colon E \to \binom{V}{2}\cup\binom{V}{1}$ assigning one or two \emph{end vertices} to every edge $e\in E$. The edge with two end-vertices assigned is a (regular) edge, while the one with a unique end-vertex assigned is a \emph{dangling edge}. We say that a vertex $v\in V$ is \emph{incident with an edge} $e\in E$ if $v\in\xi(e)$. The number of edges incident with a vertex $v$ is called the \emph{degree of $v$}, denoted by $\operatorname{deg}(v)$. A graph whose all vertices are of degree $k$ is called a \emph{$k$-regular graph}; $3$-regular, or shortly \emph{cubic graphs}, are of our interest in this paper. For the definitions of other concepts not explicitly mentioned in this text, the reader is referred to~\cite{BM08-book,diestel17-book}.

A subgraph $S=(V', E',\xi')$ of a graph $G=(V, E,\xi)$ is a graph such that $V'\subseteq V$, $E'\subseteq E$, and $\xi'=\xi|_{E'}$, the fact is denoted by $S\subseteq G$. A $1$-regular subgraph $M\subseteq G$ of a graph $G$ is called a \emph{matching}. Matching $M\subseteq G$ is \emph{maximal} if it has the maximum number of edges among all matchings of $G$. A matching spanning the whole $V$ is called a \emph{perfect matching} in the graph $G$. Let $P$ be a path. The symbol $|P|$ denotes the number of edges of $P$. The path $P$ is even (odd) if $|P|$ is even (odd). A $2$-regular subgraph of a graph is called a \emph{cycle}; a connected cycle will be called a \emph{circuit}. It is worth mentioning that the complement $G-M$ of any perfect matching $M$ in a cubic graph $G$ is a cycle. Moreover, if $M$ and $N$ are matchings in a graph, then each component of $M\cup N$ is either a circle or a path.

A cubic graph with at least one dangling edge is called a \emph{multipole}; specifically, the one with $k$ dangling edges is a \emph{$k$-pole}. A $3$-pole is called \emph{Hamiltonian} if it contains a circuit traversing all its vertices. We will use the notation $(G, H)$ for a Hamiltonian cubic $3$-pole $G$ with a distinguished Hamiltonian circuit $H$. An edge in a Hamiltonian $3$-pole $(G, H)$ will be called a \emph{chord} if it does not belong to $H$. The set of chords will be denoted by $Q$; it follows that $E(G)=E(H)\cup Q$. A chord will be called a \emph{spoke} if it is a dangling edge. In this paper, we always consider exactly three spokes $\{e_1, e_2, e_3\}$; the end vertex of $e_i$ is denoted by $v_i$. A path $S\subseteq H$ whose two end vertices are the end vertices of spokes and which does not contain the end vertex of the third spoke will be called a \emph{segment}. If $S$ is a segment, then an \emph{exterior chord for $S$} is a chord that has exactly one end vertex in $S$, and this vertex is an inner vertex of $S$. An \emph{alternating circuit} in $(G, H)$ is a circuit alternating edges of $H$ and chords (that are not spokes).  

Let $\mathcal{M}=\{M_1, M_2, \ldots, M_r\}$ be a set of perfect matchings of a graph $G$. We say that $\mathcal{M}$ \emph{covers} $G$ if for every edge $e\in E(G)$ holds that there is $M_i\in\mathcal{M}$ such that $e\in E(M_i)$. We say that $G$ is $r$-covered by $\mathcal{M}$. In this paper, we shall exclusively deal with $4$-covers of cubic multipoles, which means $\mathcal{M}=\{M_1, M_2, M_3, M_4\}$. Given a $4$-cover of a cubic graph (multipole), every vertex is incident with two edges covered by one perfect matching, and with one edge covered by two perfect matchings. The edges covered by a single perfect matching are called \emph{simply covered}, while those covered by two perfect matchings are \emph{doubly covered}. Let $(G, H)$ be a Hamiltonian cubic $3$-pole. Four perfect matchings $\{M_1, M_2, M_3, M_4\}$ form a \emph{proper $4$-cover} of $(G, H)$ if $M_4=Q$ and all three spokes are doubly covered. Parity arguments imply that one of the spokes is in $M_1\cap M_4$, another is in $M_2\cap M_4$, and the last one is in $M_3\cap M_4$. 
% Parity arguments imply that the perfect matching is different from $M_4$ is different from each of $e_1,e_2,e_3$, thus showing that there exists the required cover where $e_i\in M_4\cap M_i$ for each $i\in\{1,2,3\}$.

\section{Main Theorem}

The main result of the paper reads as follows.

\begin{theorem}\label{thm:main}
Hamiltonian cubic $3$-pole $(G, H)$ has a proper $4$-cover. 
\end{theorem}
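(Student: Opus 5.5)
The plan is induction on the number of non-spoke chords, with a parity analysis of the "canonical" cover as the starting point. Since the chords other than the spokes pair up the $n-3$ vertices not incident with spokes, $n=|V(G)|$ is odd. The three ends $v_1,v_2,v_3$ cut $H$ into three segments of lengths $a,b,c$ with $a+b+c=n$ odd. So either all three of $a,b,c$ are odd, or exactly one is odd.

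\textbf{Canonical cover.} First I would study the cover in which no $M_i$ ($i\le 3$) uses a non-spoke chord. Since $H-v_i$ is a path on an even number of vertices, it has a unique perfect matching. Hence $M_i$ is forced: it is $e_i$ together with the edges of $H$ at even position when counted from $v_i$ (each of the two edges of $H$ at $v_i$ has position $1$).
\begin{itemize}
\item If $a,b,c$ are all odd, then on each segment the two matchings belonging to its ends agree, and the third matching covers the complementary parity class. So $M_1\cup M_2\cup M_3\supseteq E(H)$, and with $M_4=Q$ this is a proper $4$-cover.
\item If, say, $a$ is odd and $b,c$ are even, this fails on the odd segment. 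This is the case that genuinely needs chords.
\end{itemize}
The base case with no non-spoke chords is the triangle $H=v_1v_2v_3$, where $a=b=c=1$, so it falls under the all-odd case.

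\textbf{Induction step.} Pick a non-spoke chord $uw$, delete $u$ and $w$, and suppress them. That is, replace the paths $u^-uu^+$ and $w^-ww^+$ of $H$ by new edges $\alpha=u^-u^+$ and $\beta=w^-w^+$. This gives a smaller Hamiltonian $3$-pole $(G',H')$ with the same spokes, which by induction has a proper $4$-cover $\{M_1',M_2',M_3',Q'\}$. Reinserting $u$ and $w$ preserves all parities along $H$ except inside the arcs containing $u$ and $w$. So each $M_i'$ is lifted as follows.
\begin{itemize}
\item If $M_i'$ contains neither $\alpha$ nor $\beta$, keep it.
\item Otherwise, form $M_i'\triangle C$, where $C$ is an alternating path/circuit formed with the chord $uw$ and the subdivided edges. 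This re-routes $M_i'$ along $H$ between the subdivision points, which is possible exactly when the arc of $H$ between them contains no spoke end, i.e.\ lies in one segment.
\end{itemize}
Then I would check that the new edges $u^-u,uu^+,w^-w,ww^+$ are each covered by at least one of the lifted $M_i$. Edges that were covered twice in the old cover give the slack needed here.

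\textbf{Main obstacle and two-phase scheme.} I expect the lifting to be the hardest step. A perfect matching containing $\alpha$ cannot simply be subdivided, because of parity. Re-routing works only if $uw$ is chosen well, e.g.\ both ends in the same segment, or $uw$ an exterior chord for a segment, or $uw$ lying on an alternating circuit. Such a chord may not exist in the current graph. Hence the two phases.
\begin{itemize}
\item \emph{Phase one} (outer induction): reduce until every chord is exterior or crosses between segments. Use the segment and exterior-chord notions to guarantee a reducible configuration.
\item \emph{Phase two} (inner induction): handle this configuration, in particular the case with exactly one odd segment, by exhibiting an alternating circuit. Swapping $M_i$ along it covers the uncovered parity class of the odd segment while keeping $M_4=Q$ and all spokes doubly covered.
\end{itemize}
I would first try to prove that a minimal counterexample always contains either a chord with both ends in one segment or an alternating circuit through the odd segment, and then carry out the lifting in each of these two cases.
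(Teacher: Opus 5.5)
\textbf{There is a genuine gap.} Your all-odd case is correct; it is the paper's Lemma~\ref{lem:allodd}. The case with exactly one odd segment is the real content of the theorem, and there you give only a plan whose central step fails as stated.

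Deleting a single chord $uw$ replaces the $2$-edge path $u^-uu^+$ by one edge $\alpha$, so parities along $H$ flip. Since $\alpha\notin Q'=M_4'$, some $M_i'$ with $i\le 3$ contains $\alpha$. Whenever $u$ and $w$ are at distance at least $3$ on $H$, no perfect matching of $G$ agrees with $M_i'$ on the edges not incident with $u$ or $w$. The reason is that the freed vertices $u^-$ and $u^+$ could only be matched through the two edges $u^-u$ and $uu^+$, both at $u$. So in general every step of your induction needs a global re-routing. You give no argument that such a re-routing exists: the arc need not be $M_i'$-alternating, since $M_i'$ may contain doubly covered chords at its inner vertices. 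Nor do you show that the re-routed matchings still cover $G$.

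The paper never deletes a single chord. It suppresses a whole alternating circuit $C$. After that, each edge of $H'$ corresponds to an \emph{odd} path of $H$, so parities are preserved and the cover extends edge by edge (Lemma~\ref{lem:alt_c}).

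Your phase two does not work as described either. An alternating circuit $C$ is alternating with respect to a chord-free canonical $M_i$ only if all $H$-edges of $C$ already lie in $M_i$. Hence $M_i\triangle C$ only removes $H$-edges, and it can never cover the missing edges of the odd segment.

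Moreover, the dichotomy you hope to prove cannot be a purely structural fact. Take $H=v_1o_1o_2v_2qv_3pv_1$ with chords $po_2$ and $qo_1$. This 3-pole has no chord inside a segment, no alternating circuit and no triangle, and the canonical cover misses $v_1o_1$ and $o_2v_2$. Such 3-poles need an independent covering argument, and your proposal has none.

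The paper supplies this argument in three steps:
\begin{enumerate}
\item A segment of length $2$ forces $3$-edge-colourability, by reducing the $4$-circuit through the added vertex $x$ in $G^*$.
\item A segment with a unique exterior chord reduces to the length-$2$ case.
\item A separate induction on $|E_1|+|E_2|$ shows that every remaining 3-pole, with both even segments having at least three exterior chords, contains an alternating circuit.
\end{enumerate}
Only with these three ingredients does the minimal-counterexample argument go through.
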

\noindent{}First, we have to discuss several essential facts.

\smallskip
Given a Hamiltonian $3$-pole $(G, H)$, an alternating circuit $C$ (alternating edges from $Q$ and from $H$) can be \emph{suppressed} in $(G, H)$ such that the resulting $3$-pole $(G', H')$ is again a Hamiltonian $3$-pole. The suppression of $C$ is done by removing the chords $C\cap Q$ of $(G, H)$ and by smoothing the resulting vertices of degree $2$ on the circuit $H$. The chords of $(G, H)$ which do not belong to $C$ persist in $(G', H')$. The following property of proper $4$-covers is important. 

\begin{lemma} \label{lem:alt_c} Let $C$ be an alternating circuit in $(G, H)$. Let $(G', H')$ be the Hamiltonian $3$-pole created by suppression of $C$. If $(G', H')$ has proper $4$-cover, then so does $(G, H)$.
\end{lemma}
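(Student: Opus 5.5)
The plan is to lift a proper $4$-cover of $(G',H')$ back to $(G,H)$ by hand. All chords of $C$ will go into $M_4$, and each matching $M_1,M_2,M_3$ will be modified locally along $H$ wherever $H$ was shortened by the suppression.

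\emph{Local structure of the suppression.} Every vertex of $C$ is incident with exactly one edge of $C\cap H$ and exactly one edge of $C\cap Q$. Hence the edges of $C\cap H$ form a perfect matching of $V(C)$ consisting of edges of $H$. Consider a maximal run of consecutive vertices of $V(C)$ along $H$; since $(G,H)$ has spokes, not all vertices of $H$ lie in $C$. Write this run as the path $w_0x_1x_2\cdots x_{s}w_1$ of $H$, where $w_0,w_1\notin V(C)$ (possibly $w_0=w_1$ in degenerate small cases, which change nothing below). The first vertex $x_1$ must be matched along $C\cap H$ to $x_2$, since its other $H$-neighbour $w_0$ is not on $C$. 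Inductively, $x_3x_4,\dots$ are the next edges of $C\cap H$, so $s=2m$ is even and $C\cap H\supseteq\{x_1x_2,x_3x_4,\dots,x_{2m-1}x_{2m}\}$. After removing the chords of $C$ and smoothing, this run becomes a single edge $f=w_0w_1$ of $H'$. Distinct runs give distinct edges of $H'$, and every other edge and chord of $G'$ is literally an edge of $G$.

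\emph{Lifting the cover.} Let $\{M_1,M_2,M_3,M_4\}$ be a proper $4$-cover of $(G',H')$, so $M_4=Q'=Q\setminus C$. Put $\widetilde M_4=Q'\cup(C\cap Q)=Q$, which is a perfect matching of $G$. For $i\le 3$ and each run with its edge $f$, proceed as follows.
\begin{itemize}
\item If $f\in M_i$, replace $f$ by the odd-position edges $w_0x_1,x_2x_3,\dots,x_{2m}w_1$ of the path.
\item If $f\notin M_i$, add $x_1x_2,\dots,x_{2m-1}x_{2m}$.
\end{itemize}
In both cases every inner vertex $x_j$ is covered exactly once. The vertices $w_0,w_1$ keep their status, because $f$ is replaced by edges at the same endpoints. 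So each $\widetilde M_i$ is a perfect matching of $G$.

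\emph{Coverage and properness.} Chords of $C$ lie in $\widetilde M_4$. Since $f\in H'$ is not in $M_4$ and each edge is covered at least once, $f$ lies in some $M_i$ with $i\le3$; hence the edges $w_0x_1,x_2x_3,\dots,x_{2m}w_1$ are covered. In a $4$-cover each vertex has total coverage $4$ on three edges, so every edge is covered at most twice. Therefore $f$ lies in at most two of $M_1,M_2,M_3$, and some $\widetilde M_i$ with $i\le 3$ contains the edges of $C\cap H$ in the run. All other edges keep their cover. The spokes are untouched, so they remain doubly covered, and $\widetilde M_4=Q$; thus the cover is proper.

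The only genuinely delicate point is the structural claim that suppressed vertices occur along $H$ in runs of even length, paired consecutively by $C\cap H$. This is what makes both the parity replacement and the ``$f$ covered once or twice'' argument work. I would verify it carefully, including the case of a run wrapping around to meet itself, which cannot happen because spoke ends lie outside $C$.
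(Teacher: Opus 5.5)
Your proof is correct and is essentially the paper's argument: you add $C\cap Q$ to $M_4$, and expand each edge of $H'$ into an odd path on which the matching(s) originally covering it alternate with the complementary one(s), which is exactly the paper's $M_a$/$M_{bc}$ and $M_{ab}$/$M_c$ alternation written per matching rather than per edge. You also explicitly verify that suppressed vertices occur in consecutively paired runs of even length, and that $w_0\neq w_1$ because the spoke ends avoid $C$; the paper only asserts this step.
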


\begin{proof}
We extend a proper 4-cover of $(G',H')$ to a proper 4-cover of $(G, H)$. All edges of $C\cap Q$ will be simply covered with $M_4$. Each edge $e$ in $H'$ corresponds to an odd path $P_e$ in $H$ where the edges alternate between those not in $C$ and those in $C$. First, assume that $e$ is simply covered in $H'$, say with a perfect matching $M_a$. Since all chords in $(G', H')$ are covered with $M_4$ (and possibly one other perfect matching), we have $a\in\{1,2,3\}$. On the path $P_e$ in $H$ we alternate the edge simply covered by $M_a$ with doubly covered edges, covered by $M_b$ and $M_c$ (shortly $M_{bc}$) where $\{a,b,c\}=\{1,2,3\}$. The edges incident with the starting and the terminal vertex of $P_e$ will be simply covered with $M_{a}$, and each internal vertex of $P_e$ has three incident edges covered with $M_4$, $M_a$, and $M_{bc}$, respectively. Similarly, if $e$ is doubly covered in $H'$, say by $M_a$ and $M_b$, the path $P_e$ will consist of doubly covered edges, covered by $M_{ab}$ and simply covered edges, covered by $M_c$, and the edges incident with the starting and to the terminal vertex of $P_e$ will be doubly covered with $M_{ab}$. As a result, a proper $4$-cover of $(G, H)$ is constructed.
\end{proof}

\noindent{}Let $(G, H)$ be a Hamiltonian cubic 3-pole; therefore, it has an odd number of vertices. The vertices $v_1, v_2, v_3$ either split $H$ into three segments of odd length or into two segments of even length and one segment of odd length. Briefly, we will use even and odd segments, respectively. The situation when there are three odd segments in $(G, H)$ is treated separately.

\begin{lemma}\label{lem:allodd}
A Hamiltonian $3$-pole $(G, H)$ with three odd segments has a proper $4$-cover.
\end{lemma}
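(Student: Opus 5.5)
The proof is a direct explicit construction; no induction and no use of Lemma~\ref{lem:alt_c} is needed. Every chord that is not a spoke will be simply covered by $M_4=Q$, and each spoke $e_i$ will be doubly covered by $M_i\cap M_4$. The edges of $H$ are then coloured segment by segment, so that each of $M_1,M_2,M_3$ meets every vertex exactly once.

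Denote the three segments by $S_3$ (from $v_1$ to $v_2$), $S_1$ (from $v_2$ to $v_3$) and $S_2$ (from $v_3$ to $v_1$), so that $S_k$ is the segment avoiding $v_k$. By hypothesis each $S_k$ has odd length. On $S_k$, with $\{i,j,k\}=\{1,2,3\}$, colour the edges alternately: put the first edge into $M_k$ only, the second into $M_i\cap M_j$, the third into $M_k$ only, and so on. Because $|S_k|$ is odd, the last edge of $S_k$ is again covered by $M_k$ only. In particular, both end-edges of $S_k$ lie in $M_k$ alone.

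\textbf{Inner vertices.} Let $w$ be an inner vertex of a segment $S_k$. Its two $H$-edges are one edge of $M_k$ and one edge of $M_i\cap M_j$, and its third edge is a chord, which lies in $M_4$ only. So $w$ is met exactly once by each of $M_1,M_2,M_3,M_4$.

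\textbf{Spoke ends.} Take $v_1$. Its $H$-edges are the end-edge of $S_3$, which lies in $M_3$ only, and the end-edge of $S_2$, which lies in $M_2$ only. Its spoke $e_1$ lies in $M_1\cap M_4$. Hence $v_1$ is also met exactly once by each matching. The same argument works symmetrically at $v_2$ and $v_3$.

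\textbf{Conclusion.} Each $M_r$ is therefore a perfect matching of the $3$-pole, and $M_4=Q$. Every edge lies in at least one $M_r$, and every spoke is doubly covered. This is exactly a proper $4$-cover.

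There is no genuine obstacle here. The only point needing care is the parity bookkeeping: oddness of every segment is exactly what makes both end-edges of $S_k$ receive the simple colour $M_k$, and hence makes the colours at each $v_i$ complementary to the spoke's pair $M_iM_4$. When two segments are even this alternating pattern fails at the spoke ends, which is why that case needs separate, inductive treatment.
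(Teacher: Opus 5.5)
Your construction is correct and is exactly the paper's: $M_4=Q$, each spoke $e_i$ also placed in $M_i$, and the segment between $v_a$ and $v_b$ alternating $M_c$ with $M_a\cap M_b$. Your vertex-by-vertex verification, using oddness to force both end-edges of each segment into $M_c$ alone, spells out what the paper leaves implicit.
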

\begin{proof}
We define a proper $4$-cover of $(G, H)$ as follows. The chords of $(G, H)$ will constitute the perfect matching $M_4$. The spoke $e_i$ will also belong to a perfect matching $M_i$. The segment $S_{ab}$ between $v_a$ and $v_b$ alternate $M_c$ and $M_{ab}$, respectively, for each segment $S_{ab}$ where $\{a,b,c\} = \{1,2,3\}$.
\end{proof}

In the following we always expect that $(G,H)$ has two even segments and the unique odd segment, if it is not stated otherwise. The vertices $v_1, v_2, v_3$ of $H$ are the boundary vertices determining the segments. The even segments reside between $v_1$ and $v_3$ and between $v_2$ and $v_3$; denote them $E_1$ and $E_2$, respectively. Then the odd segment is between $v_1$ and $v_2$ and is denoted by~$O$.

\begin{lemma}\label{lem:segm2}
A Hamiltonian $3$-pole $(G, H)$ with a segment of length $2$ is $3$-edge-colourable. Moreover, $(G, H)$ has a proper $4$-cover.
\end{lemma}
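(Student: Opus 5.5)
The plan is to prove the first assertion, $3$-edge-colourability, directly and then read off the proper $4$-cover from the colouring. A segment of length $2$ is even, so by the convention above we may assume it is $E_1=v_1wv_3$. Let $u$ be the other end of the chord at $w$; it lies on $O\cup E_2$, possibly $u=v_2$. Let $x$ and $y$ be the neighbours of $v_1$ and $v_3$ on $H$ other than $w$. Then $H-w$ is a path $P$ from $v_1$ to $v_3$ through $x,\dots,y$ on $n-1$ vertices, where $n=|V(G)|$ is odd. Hence $P$ has odd length, and $u$ cuts it into subpaths $P_1$ (from $v_1$ to $u$) and $P_2$ (from $u$ to $v_3$) of opposite parity.

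For the colouring I would aim for colour classes $N_1,N_2,N_3$ with $e_1\in N_1$, $e_3\in N_3$, and $v_1w\in N_3$, $wv_3\in N_1$, $wu\in N_2$. I would build $N_2$ as a perfect matching containing $wu$ and $e_2$ whose complement consists of even circuits together with the dangling path $e_1v_1wv_3e_3$. Concretely, start from $Q$. Replace $e_1,e_3$ by $v_1x,v_3y$, and remove the chords at $x$ and $y$. Propagate the resulting defects along $P$ by switching between chords and $H$-edges, that is, take the symmetric difference of $Q$ with an alternating structure. The parity split of $P$ at $u$ is what should make the propagated defects meet consistently at $u$, with $wu$ absorbing the last defect.

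If this direct construction becomes messy, the fallback is a Kempe-chain argument. Colour $P$ alternately $1,2$, colour every chord $3$, and give $e_1,e_3$ and the three edges at $w$ their forced colours. This produces at most one conflict, located at $w$ or $u$. I would then swap colours on the $(1,3)$- or $(2,3)$-Kempe chain starting at the conflict. Because all components of $G-w$ restricted to two colours are paths or circuits, the chain either closes up harmlessly or ends at a spoke. The counting of odd and even pieces of $P$ should exclude the bad ending.

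The main obstacle is this consistency step, namely showing that the local repair at $u$ and $w$ does not propagate to a contradiction at $v_1$ or $v_3$. The naive alternate colouring fails precisely because both ends of the odd path $P$ receive the same colour, so the chord $wu$ must be used to correct the parity.

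The second assertion is then immediate. By the parity lemma, each colour class of a $3$-edge-colouring of a $3$-pole contains exactly one spoke. Take $M_1,M_2,M_3$ to be the three colour classes, numbered so that $e_i\in M_i$, and set $M_4=Q$. The set $Q$ is a perfect matching of the $3$-pole, because every vertex is incident with exactly one chord or spoke. The classes $M_1,M_2,M_3$ cover all edges, and each spoke lies in $M_i\cap M_4$, so it is doubly covered. This gives a proper $4$-cover.
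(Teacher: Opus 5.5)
\textbf{Verdict: genuine gap.} Your passage from a $3$-edge-colouring to a proper $4$-cover is correct and matches the paper: take $M_i$ to be the colour class containing $e_i$ and set $M_4=Q$. The problem is the colourability itself. Neither of your two routes is carried out. The step you flag as the main obstacle is left to the hope that parity counting on $P$ ``excludes the bad ending'', and it cannot.

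Make your fallback concrete. Since $|P|$ is odd, both end edges of $P$ get the same colour, say $1$. So $v_1w$ and $wv_3$ must get $2$ and $3$ in some order, and $wu$ gets $1$. Note that $u\neq v_2$, since $v_2$ already carries $e_2$. The only conflict is at $u$, which sees colour $1$ twice.

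A $(1,3)$-chain that ends in a spoke is harmless, because the parity lemma then makes the spoke colours distinct automatically. The harmful case is the one you call harmless. There the $(1,3)$-component of $u$ closes up through $w$ into a circuit. This circuit is necessarily odd, because the two colour-$1$ edges at $u$ are consecutive on it. Switching any part of it just moves the conflict elsewhere. This really happens: if $u=x$ in your notation and $v_1w$ gets colour $3$, that component is the triangle $uv_1w$.

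The missing idea is to use the one remaining freedom, the order of $2,3$ on $v_1w,wv_3$, and to choose it after inspecting $G-w$. In $G-w$, take the colour-$1$ edges of $P$, the chords other than $wu$, and $e_2$. They form paths and circuits whose only ends are $u$, $v_1$, $v_3$ and the free end of $e_2$. So the path leaving $u$ along its colour-$1$ edge ends at $v_1$, at $v_3$, or in $e_2$.

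\begin{itemize}
\item If it ends at $v_1$, set $v_1w=2$ and $wv_3=3$, which forces $e_1=3$ and $e_3=2$. The $(1,3)$-chain from $u$ then leaves $v_1$ through the dangling edge $e_1$. Interchanging $1$ and $3$ on it gives a proper colouring.
\item If it ends at $v_3$, the argument is symmetric.
\item If it ends in $e_2$, either order works.
\end{itemize}

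This completes the lemma.

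For comparison, the paper attaches a new vertex to the spokes and collapses the $4$-circuit it forms with $v_1,w,v_3$ into a Hamiltonian graph $B$. However, it joins the outer neighbours of \emph{opposite} vertices of that $4$-circuit. Such a reduction lifts a colouring of $B$ only if the two new edges receive the same colour. The Hamiltonian colouring of $B$ does not provide this, since one new edge lies on the Hamiltonian circuit and the other is a chord. So a Kempe-type step like the one above is needed there as well.
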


\begin{proof}
Without loss of generality, we may assume that $|E_1|=2$. Attach the three free ends of spokes $e_1, e_2$, and $e_3$ to a new vertex $x$; the resulting graph will be denoted by $G^*$. We will show that $G^*$ is 3-edge-colourable. 

To see this, we create an auxiliary graph $B$. Let $y$ be the vertex that is adjacent to both $v_1$ and $v_3$, thus $G^*$ contains $4$-circuit $v_1yv_3x$. Let $v_1'$ be the neighbour of $v_1$ different from $x$ and $y$, let $v_3'$ be the neighbour of $v_3$ different from $x$ and $y$ and let $y'$ be such a vertex that $yy'$ is a chord in $G$. To create $B$, delete the vertices $v_1yv_3x$, leaving four new dangling edges incident with $v'_1, v'_3, v_2$ and $y'$ respectively. Reconnect $v_1'$ and $v'_3$ by removing the two dangling edges incident to $v'_1$ and $v'_3$, respectively, and by adding a new edge $f_1$ such that $\xi(f_1) = \{v'_1, v'_3\}$. Reconnect the vertices $y'$ and $v_2$ in the same fashion. The resulting graph $B$ is derived from $G^*$ by removing the vertices of the $4$-cycle $v_1yv_3x$ and by restoring $3$-regularity is Hamiltonian, thus $3$-edge-colourable. It is a widely recognised fact that if $B$ is $3$-edge-colourable, then so is $G^*$. A $3$-edge-colouring of $G^*$ induces a $3$-edge-colouring of $G$.

Since $G^*$ is $3$-edge-colourable, we can assume that spoke $e_i$ is coloured with the colour $i\in\{1,2,3\}$. Each matching $M_i$ then corresponds to the colour class indexed by $i$. The perfect matching $M_4$ corresponds to the set of chords of $(G, H)$. The collection of perfect matchings $\mathcal{M} = \{M_1, M_2, M_3, M_4\}$ forms a proper 4-cover of~$(G, H)$. 
\end{proof}

\begin{lemma}\label{lem:uniq}
A Hamiltonian $3$-pole $(G, H)$ with a segment possessing a unique exterior chord has a proper $4$-cover.
\end{lemma}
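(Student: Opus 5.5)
The plan is to cut $(G,H)$ along the three edges leaving the interior of the segment, cover the two resulting pieces separately, and glue. Let $S$ be the segment, say between $v_a$ and $v_b$, and let $f=uw$ be its unique exterior chord, with $u$ an inner vertex of $S$. Let $X$ be the set of inner vertices of $S$ and $P\subseteq H$ the path they induce, with end vertices $a$ (adjacent to $v_a$) and $b$ (adjacent to $v_b$). Every chord with an end in $X$ other than $f$ has both ends in $X$, and none of these vertices carries a spoke. So exactly three edges leave $X$: $h_a=v_aa$, $h_b=bv_b$ and $f$. The argument will be by induction on the number of vertices, with Theorem~\ref{thm:main} as the induction hypothesis. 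The case $|X|=1$ is exactly Lemma~\ref{lem:segm2}, so assume $|X|\ge 3$; $|X|$ is odd because $X$ is a $3$-edge-cut side.

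\textbf{Outer piece.} Contract $X$ to a single vertex $x$. The result $(G_1,H_1)$ is again a Hamiltonian $3$-pole: $H_1$ passes through $x$ via $h_a,h_b$, and $f$ becomes a chord $xw$. In $(G_1,H_1)$ the segment corresponding to $S$ has length $2$. Hence Lemma~\ref{lem:segm2} gives a $3$-edge-colouring of $G_1$, and not only a proper $4$-cover. This is the flexible object I want: the three colour classes together with $Q$ (or a suitable recombination) let me prescribe which of $h_a,h_b,f$ carries which colour, up to permuting colours.

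\textbf{Inner piece.} Here I will use the $3$-pole $X$ with dangling edges $h_a,h_b,f$, and seek a cover of it whose trace on the cut matches the one coming from $G_1$. In any cover, each perfect matching meets the odd cut $\{h_a,h_b,f\}$ in an odd number of edges. A $4$-cover restricted to $x$ on the $G_1$ side has the shape ``one edge doubly covered, two simply covered'', with every matching meeting the cut exactly once. So on the $X$ side I need four perfect matchings of $X$ covering all its edges, with the same shape at the cut and the same labels. I will get this by closing $X$ up into a Hamiltonian $3$-pole: take $H_2=P+ab$ (adding the edge $ab$) and keep $h_a,h_b,f$ as spokes. This is smaller than $G$, so by induction it has a proper $4$-cover. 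Alternatively, if needed, I will apply Lemma~\ref{lem:alt_c} and Lemmas~\ref{lem:allodd}, \ref{lem:segm2} to it directly.

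\textbf{Gluing, the main obstacle.} The proper cover of the closed-up inner piece has all three cut edges doubly covered together with $M_4$, whereas in $G$ the edges $h_a,h_b$ lie on $H$ and must avoid $M_4=Q$. So the two covers cannot be glued naively. I expect this mismatch to be the real difficulty. My first attempt is to abandon the requirement that the outer side use a proper cover and exploit the $3$-edge-colouring of $G_1$ instead. Relabelling the matchings of the inner cover so that the inner $M_4$ becomes an outer colour class, I will try to realise on $G_1$ a $4$-cover whose pattern at $x$ is ``all three edges doubly covered, one common matching''. Here that common matching is $Q_1$ on the $X$ side, and the cover of $G_1$ is obtained from the $3$-colouring plus one extra perfect matching through $x$ and $w$. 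I will check that the glued matchings are perfect matchings of $G$, that $Q$ is one of them, and that $e_1,e_2,e_3$ end up doubly covered, re-pairing matchings (via Kempe-type swaps in the $3$-coloured $G_1$) if the spoke condition fails.
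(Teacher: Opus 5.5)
Your outer piece matches the paper: contracting the inner vertices $X$ of $S$ to a vertex $x$ gives a Hamiltonian $3$-pole with a segment of length $2$, and Lemma~\ref{lem:segm2} gives it a proper $4$-cover. The gap is on the inner side, in two places.

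\textbf{The inner $3$-pole does not exist.} Every vertex of $X$ already carries its own chord, either an internal chord or $f$. So if you add $ab$ and keep $h_a,h_b$ dangling, both $a$ and $b$ get degree $4$. Closing $P$ up correctly (dropping $h_a,h_b$) leaves only $f$ dangling. That is a $1$-pole, so the induction hypothesis has nothing to apply to.

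\textbf{The gluing target is impossible.} In any $4$-cover of $G_1$, each of the four perfect matchings contains exactly one edge at $x$. Hence exactly one of the three edges at $x$ is doubly covered, and no matching contains two of them. The pattern ``all three edges doubly covered, one common matching'' at $x$ therefore cannot occur, and no Kempe-type re-pairing will produce it.

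\textbf{What is missing.} The inner side needs no induction, because the proper cover of $G_1$ already has the right pattern at $x$.
\begin{itemize}
\item The vertices $v_a,v_b$ carry doubly covered spokes, and $xv_a,xv_b\in H_1$ avoid $M_4$. So these two edges are simply covered, by distinct matchings $M_{a'},M_{b'}$ with $a',b'\in\{1,2,3\}$.
\item Consequently $f=xw$ lies in $M_{c'}\cap M_4$, where $c'$ is the remaining index.
\item Colour the path $v_a a\ldots b v_b$, of even length $|S|$, alternately with $M_{a'}$ and $M_{b'}$. Then $h_a\in M_{a'}$ and $h_b\in M_{b'}$, consistent with the outer side.
\item Put every chord with an end in $X$ into $M_{c'}\cap M_4$.
\end{itemize}
Every vertex of $X$ now meets each of the four matchings exactly once, $M_4$ is still $Q$, and the spokes are untouched. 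This is a proper $4$-cover of $(G,H)$, and it is exactly the paper's argument.

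Your own remark that the cut pattern at $x$ is ``one edge doubly covered, two simply covered, every matching meets the cut once'' was the right observation. You only needed to see that this pattern is trivially realisable on $X$, instead of replacing it with the proper-cover pattern of an auxiliary $3$-pole.
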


\begin{proof}
Let $(G, H)$ be a $3$-pole with a segment $S$ with a unique exterior chord. It is easy to see that $S$ is even. Without loss of generality, we may assume that $S=E_1$, between vertices $v_1$ and $v_3$, has a unique exterior chord incident with a vertex $u$ outside of $E_1$. We create a Hamiltonian $3$-pole $(G', H')$ from $(G, H)$ by deleting all inner vertices of $E_1$ and by introducing a new vertex $q$, connected to $v_1$, $v_3$, and $u$. The Hamiltonian circuit $H'$ in $G'$ inherits the segments $E_2$ and $O$ from $H$ and gets two new edges of the path $v_1qv_3$. The 3-pole $G'$  has a segment $E'_1$ of length $2$; by Lemma~\ref{lem:segm2}, it has a proper $4$-cover. 

We extend the covering of $G'$ to $G$. The edge $qv_1$ of $G'$ is simply covered by $M_a$, for some $a\in\{1,2,3\}$, since it is adjacent to a doubly covered edge $e_1$. For the same reasons, is the edge $qv_3$ simply covered by $M_b$, $b\in\{1,2,3\}$, $a\neq b$. Again, the parity arguments imply that the only exterior chord of $E'_1$ is covered with $M_c$ and $M_4$, where $\{a,b,c\}=\{1,2,3\}$.  We define the covering of $G$ in such a way that the edges not incident with the inner vertices of $E_1$ will be covered with the same perfect matching(s) as they were in $G'$. The edges on $E_1$ will be alternately covered with $M_a$ and $M_b$ when proceeding from $v_1$ to $v_3$, and finally, all the chords with at least one end vertex in $E_1$ will be covered with $M_c$ and $M_4$. In this way, all the edges of $G$ are covered, and the perfect matching $M_4$ covers chords only, as required.   
\end{proof}

All Hamiltonian $3$-poles that satisfy the assumptions of Lemma~\ref{lem:allodd}, Lemma~\ref{lem:segm2}, or Lemma~\ref{lem:uniq} possess a proper $4$-cover. It remains to address Hamiltonian $3$-poles $(G, H)$ with two even segments where the two even segments $E_1$ and $E_2$ have more than one exterior chord. The remaining segment $O$ of the Hamiltonian circuit is odd. We denote this family of Hamiltonian cubic $3$-poles by $\mathcal{G}$. The parity argument implies that each $E_i$ has at least three exterior chords.  By induction on $|E_1|+|E_2|$, we will demonstrate that all graphs in $\mathcal{G}$ possess an alternating circuit. Consequently, by applying Lemma~\ref{lem:alt_c}, no member of $\mathcal{G}$ can be the smallest counterexample for Theorem~\ref{thm:main}.
% \begin{lemma}[Basis]\label{lem:basis}
% A Hamiltonian $3$-pole $(G, H)\in\mathcal{G}$ with two even segments $E_1$ and $E_2$ such that $|E_1|=|E_2|=4$ has an alternating circuit 
% \end{lemma}

\begin{lemma}\label{lem:induction}
All Hamiltonian $3$-poles in $\mathcal{G}$ have an alternating circuit.
\end{lemma}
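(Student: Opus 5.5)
Our plan is induction on $|E_1|+|E_2|$, as announced before the statement, using a reduction that shortens an even segment while preserving membership in $\mathcal{G}$ (or producing an alternating circuit directly), and that lets us lift an alternating circuit from the smaller $3$-pole back to $(G,H)$.

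\emph{Base case.} Since each $E_i$ has at least three exterior chords and lengths are even, the base case is $|E_1|=|E_2|=4$. Then each $E_i$ has exactly three inner vertices, and all of them carry exterior chords ending in the other segments. We will search directly for an alternating circuit by following a chord from an inner vertex $x$ of $E_1$, then an $H$-edge, then a chord, and so on. The walk is forced up to the choice of $H$-direction at each step, so this becomes a finite check of configurations. The odd segment $O$ may contain arbitrarily many vertices, but the internal chords of $O$ only serve as connectors, and we will argue by parity on $O$ that such a path through $O$ closes up.

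\emph{Inductive step.} Take $(G,H)\in\mathcal{G}$ and suppose some even segment, say $E_1$, has an inner vertex $w$ adjacent on $H$ to $v_1$ (or $v_3$). Let $ww'$ be its chord. If $w'$ also lies in $E_1$, we use that chord; otherwise it is exterior.
\begin{itemize}
\item We form a smaller $3$-pole by ``sliding'' the spoke: make the path $v_1 w$ the new endpoint region. Concretely, delete $v_1$, let $w$ carry the spoke $e_1$, and reconnect the former $H$-neighbour of $v_1$ in $O$ with $w'$'s chord partner via a new chord, or some similar local rewiring that keeps $G'$ cubic and Hamiltonian.
\item This changes $|E_1|$ by $2$ and keeps $O$ odd, via a double step that deletes two vertices $v_1,w$ and reroutes.
\item We must verify that $G'$ stays in $\mathcal{G}$: both even segments have length at least $4$ and more than one exterior chord. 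If $G'$ falls out of $\mathcal{G}$, it falls into one of the cases of Lemmas~\ref{lem:segm2} and~\ref{lem:uniq}. In that situation we instead exhibit an alternating circuit in $G$ directly, using the structure forced by a unique exterior chord or a segment of length $2$.
\item By induction, $G'$ has an alternating circuit $C'$. We lift it to $G$. If $C'$ avoids the new chord, it is already alternating in $G$. If it uses the new chord, we replace that chord by the path through the deleted vertices, which alternates chord, $H$-edge, chord.
\end{itemize}

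\emph{Main obstacle.} The hard part is designing the reduction so that the lift always works. The replacement path must have the correct alternation parity, and it must not collide with the rest of $C'$ at vertices $C'$ already visits. We would first try the reduction that contracts the two-vertex piece $v_1 w$ into the spoke endpoint while splicing $w$'s chord to the $O$-neighbour of $v_1$. We would then check the alternation pattern along $O$, where $H$-edges and chords must interleave correctly. If parity fails there, we would instead choose $w$ as an inner vertex whose chord goes into $O$, which exists by counting exterior chords, and repeat the argument with that choice.
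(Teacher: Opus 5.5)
Your outline has the right skeleton (induction on $|E_1|+|E_2|$, shortening an even segment by two, and lifting by replacing a new chord with a chord--$H$-edge--chord path). However, both the base case and the step where the reduction leaves $\mathcal{G}$ lack the idea that makes them work.

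In the base case nothing is finite. Only the inner vertices $a_1,a_2,a_3$ of $E_1$ and $b_1,b_2,b_3$ of $E_2$ are bounded; $O$ and its internal chords are arbitrary, and ``argue by parity on $O$ that such a path closes up'' is not an argument. The device you need is to fix the perfect matching $M_O$ of $O-\{v_1,v_2\}$. Then $Q\cup M_O$ (spokes removed) splits into alternating circuits, in which case you are done, and three paths pairing up $a_1,a_2,a_3,b_1,b_2,b_3$. Most pairings close up using one $H$-edge in $E_1$ and/or one in $E_2$ (e.g.\ $P_1b_kb_2P_2a_2a_1$ or $Ra_1a_2$). For the pairing $\{a_1,a_3\},\{a_2,b_2\},\{b_1,b_3\}$, however, there is provably no alternating circuit whose $O$-edges lie in $M_O$. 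Any such circuit would have to contain the $a_1a_3$-path or the $b_1b_3$-path, and that forces both $H$-edges at $a_2$ (resp.\ $b_2$). So no forced-walk argument with a fixed choice inside $O$ can finish; you must change the matching on $O$. The paper does this as follows:
take a shortest subpath $W$ of $O$ between a vertex of the $a_1a_3$-path and a vertex of the $b_1b_3$-path; shift the matching along $W$, leaving the two vertices $z_r',z_g'$ just outside $W$ unmatched; delete the tails of those two paths beyond $W$; and add the $H$-edges $a_2a_{4-i}$ and $b_2b_{4-j}$. What remains is a nonempty $2$-regular subgraph alternating between $H$ and $Q$.

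In the inductive step your reduction is not pinned down. As literally described (delete $v_1$, give $w$ the spoke, add a chord at $v_1$'s neighbour in $O$) it does not produce a cubic Hamiltonian $3$-pole, and a lifted path through a deleted $v_1$ would have to use the spoke. The reduction that works deletes two consecutive \emph{inner} vertices $u_1,u_2$ of $E_1$ and adds the $H$-edge $v_1u_3$ and the chord $u_1'u_2'$. The lift $u_1'u_2'\mapsto u_1'u_1u_2u_2'$ is then valid because no alternating circuit passes through $v_1$.

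The genuine gap is your fallback for $G'\notin\mathcal{G}$. You assert that an alternating circuit of $G$ can be exhibited ``directly'' from a length-$2$ segment or a unique exterior chord, but give no construction. Lemmas~\ref{lem:segm2} and~\ref{lem:uniq} only give a $4$-cover of $G'$, not an alternating circuit of $G$. The missing idea is a second choice of reduction. A parity count shows that $G'$ leaves $\mathcal{G}$ only if both $u_1u_1'$ and $u_2u_2'$ are exterior chords of $E_1$. In that case, reduce at the other end, on $u_h,u_{h-1}$. The surviving chords $u_1u_1'$ and $u_2u_2'$ give the shortened $E_1''$ at least two, hence by parity three, exterior chords. $E_2$ can lose exterior chords only if both $u_h'$ and $u_{h-1}'$ lie in $E_2$ --- but then those two chords would already have kept $(G',H')$ in $\mathcal{G}$.
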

\begin{proof} Since each of $E_1$ and $E_2$ has at least three exterior chords, we have $|E_i|\ge 4$. The basis of induction is thus formed by $3$-poles $G$ with $|E_1|=|E_2|=4$ and with exactly three exterior chords in each $E_i$.

\medskip
\noindent\emph{Base case.} Let $(G, H)$ be a Hamiltonian $3$-pole the odd segment $O$ with end vertices $v_1$ and $v_2$. Let $M_O$ be the maximal matching in $O-\{v_1, v_2\}$. Let the three vertices in $E_1$ be denoted $a_1, a_2, a_3$ when proceeding from $v_1$ to $v_3$ and let the three vertices in $E_2$ be denoted $b_1, b_2, b_3$ when proceeding from $v_2$ to $v_3$. 

The components of $Q\cup M_O$ are even circuits and paths that start and end at the vertices $a_1, a_2, a_3, b_1, b_2, b_3$; we have three such paths in $Q\cup M_O$.  If there is a circuit in $Q\cup M_O$, it is an alternating circuit, and we are done. We therefore assume that $Q\cup M_O$ is formed by exactly three paths with the end vertices $a_1, a_2, a_3, b_1, b_2, b_3$. 

We start with the case that each of these paths has one vertex in $\{a_1, a_2, a_3\}$ and the other in $\{b_1, b_2, b_3\}$. We denote the paths starting at $a_i$ by $P_i$. Assume first that the terminal vertex of $P_2$ is $b_2$. Let $b_k$ be the terminal vertex of $P_1$, here $k\in\{1,3\}$. Then $P_1b_kb_2P_2a_2a_1$ is an alternating circuit. Assume now that $P_2$ ends at $b_m$ for $m\in\{1,3\}$. Let $P_l$ be the path that ends at $b_2$, clearly $l\in\{1,3\}$. Then $P_lb_2b_mP_2a_2a_l$ is an alternating circuit. The remaining case is that there is one path with both ends in $\{a_1,a_2,a_3\}$, one path with both ends in $\{b_1,b_2,b_3\}$ and one path with one end in $\{a_1,a_2,a_3\}$ and the other in $\{b_1,b_2,b_3\}$. Let $R$ be the path that has one end vertex $a_2$. If the other end of $R$ is $a_1$ or $a_3$, say $a_1$, then $Ra_1a_2$ is an alternating circuit, similarly for the path with end vertex at $b_2$. It remains to consider the case that one of the paths has the end vertices $a_1$ and $a_3$, the other $b_1$ and $b_3$, and the third path has $a_2$ and $b_2$ as its end vertices.

Let the path between $a_1$ and $a_3$ be called $R_a$, let the path between $b_1$ and $b_3$ be called $R_b$ and let $R$ be the path between $a_2$ and $ b_2$. The edges of $R_a$ will be called \emph{red}, the edges of $R_b$ will be called \emph{green}, and the edges of $R$ will be called \emph{grey}. As there are no circuits in $Q\cup M_O$, each edge of $M_O$ is either red, green or grey.  Since each $E_i$ has three exterior chords and one of these chords has the other end in $E_{3-i}$, both $R_a$ and $R_b$ intersect the segment $O$. Let $W$ be a shortest subpath of $O$ which has one end vertex incident with a green edge and the other end vertex with a red edge; this implies that on $W$ alternate uncoloured and grey edges. The end vertex of $W$ incident with the red edge is denoted by $z_r$, and the one incident with the green edge is $z_g$. Let $z_r'$ be the vertex on $O$ outside $W$ adjacent to $z_r$ (the edge $z_rz_r'$ is red) and let $z_g'$ be the vertex on $O$ outside $W$ adjacent to $z_g$ (the edge $z_gz_g'$ is green). Let $R_a'$ be the subpath of $R_a$ between the vertices $z_r'$ and $a_i$ for some $i\in\{1,3\}$ not containing $z_r$ and let $R_b'$ be the subpath of $R_b$ between the vertices $z_g'$ and $b_j$ for some $j\in\{1,3\}$ not containing $z_g$. Let $N_O$ be the maximal matching of $O-\{v_1,v_2,z_r',z_g'\}$ and let $N=N_O\cup \{a_2a_{3-i},b_2b_{3-j}\}$. The edges of $(N\cup Q)-(E(R_r')\cup E(R_g')\cup \{e_1,e_2,e_3\})$ induce a $2$-regular graph that alternates edges of $H$ and $Q$. Moreover, this $2$-regular graph is nonempty as it contains edges $a_2a_{3-i}$ and $b_2b_{3-j}$, therefore it must contain an alternating circuit.

\medskip
We proceed now to the induction step.
We assume that all $3$-poles in $\mathcal{G}$ such that the sum of lengths of even segments is smaller than that of $(G, H)$ have an alternating circuit. Further, we assume that the length of at least one of $E_1$ and $E_2$ is greater than~$4$. 

\smallskip
\noindent\emph{Induction step.} Without loss of generality we will assume that $|E_1|>4$ and let $v_1, u_1, u_2,\ldots, u_h, v_3$ be the vertices in $E_1$, ordered in the direction from $v_1$ to $v_3$. Recall that $h$ is odd. Let $u_1'$ and $u_2'$ be the vertices in $H$ such that $u_1u_1'$ and $u_2u_2'$ are chords. Create a new Hamiltonian $3$-pole $(G', H')$ removing the vertices $u_1$ and $u_2$ and their incident edges and connecting $v_1$ with $u_3$ and $u_1'$ with $u_2'$ with an edge. The Hamiltonian circuit $H'$ of $G'$ will be formed by $v_1u_3u_4\ldots u_hv_3E_2O$ and the three segments of $H'$ will be $E_1'$, $E_2$, and $O$ where $E_1'=v_1u_3u_4\ldots u_hv_3$ is even. Assume that each of $E_1'$ and $E_2$ had at least three exterior chords. By induction hypothesis, $(G', H')$ contains an alternating circuit, say $A'$. The circuit $A'$ can be extended to an alternating circuit $A$ of $H$ such that if $A'$ contains the edge $u_1'u_2'$, we replace it by the path $u_1'u_1u_2u_2'$, and if $A'$ does not contain $u_1'u_2'$, then $H$ completely 
corresponds to $H'$. Since the edge $v_1u_3$ could not belong to $A'$ because $e_1$ is a spoke, $A$ is indeed an alternating circuit.

Assume therefore that at least one of $E_1'$ and $E_2$ does not contain three exterior chords in $(G', H')$. This implies that both $u_1u_1'$ and $u_2u_2'$ are exterior chords for $E_1$. Therefore $u_1'$ and $u_2'$ belong to $E_2\cup O$. 

Apply the same procedure as we did on $u_1$, $u_2$ in $H$ now for $u_h$, $u_{h-1}$ in $H$ and call the $3$-pole created this way $(G'', H'')$. The edges $u_1u_1'$ and $u_2u_2'$ are the exterior chords of $E_1''$, where $E_1''$ is the corresponding even segment, and since the number of exterior chords of even segments is odd, we have that $E_1''$ has at least three exterior chords and $|E_1''|\ge 4$. Assume that at most one of $u_h'$ and $u_{h-1}'$ belongs to $E_2$. Then the number of exterior chords in $E_2$ remains the same as in $(G, H)$, therefore $(G'', H'')$ belongs to $\mathcal{G}$ and we can use the induction hypothesis on $(G'', H'')$, which has an alternating circuit and therefore so does $(G, H)$. It remains to consider the case when both $u_h'$ and $u_{h-1}'$ belong to $E_2$, but in this case, in $(G', H')$, both even segments have at least two (and by parity argument, at least three) exterior chords, and this case is already covered. This finishes the induction step.
\end{proof}

\begin{proof}[Proof of Theorem~\ref{thm:main}]
Let $(G, H)$ be a smallest counterexample to Theorem~\ref{thm:main} with respect to the number of vertices. By Lemmas~\ref{lem:allodd}, \ref{lem:segm2}, and~\ref{lem:uniq}, it follows that $(G, H) \in \mathcal{G}$. Further, by Lemma~\ref{lem:induction}, the Hamiltonian $3$-pole $(G, H)$ contains an alternating circuit. Applying Lemma~\ref{lem:alt_c}, we deduce that $(G, H)$ possesses a proper $4$-cover, as $(G', H')$ obtained from $(G, H)$ by the suppression of the alternating circuit is smaller and must therefore have a proper $4$-cover. Consequently, such a counterexample cannot exist.
\end{proof}

\begin{corollary} Let $G$ be a cubic graph with a $2$-factor of two odd circuits, whose complementary $1$-factor contains exactly three spokes (and any number of chords). Then $G$ can be covered by four perfect matchings.
\end{corollary}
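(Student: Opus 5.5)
We reduce the corollary to Theorem~\ref{thm:main} by cutting $G$ along the three spokes and gluing. Let $F=C_1\cup C_2$ be the $2$-factor of $G$ consisting of the two odd circuits, and let $M=E(G)-E(F)$ be the complementary $1$-factor. Exactly three edges $s_1,s_2,s_3$ of $M$ join $C_1$ to $C_2$; every other edge of $M$ is a chord inside $C_1$ or inside $C_2$.

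Cutting $s_1,s_2,s_3$ splits $G$ into two cubic $3$-poles $G_1$ and $G_2$. Here $G_j$ consists of $C_j$, the chords of $M$ inside $C_j$, and the three halves of the $s_i$ as dangling edges. Each $(G_j,C_j)$ is a Hamiltonian $3$-pole in the sense of the paper, with $C_j$ as the distinguished Hamiltonian circuit. Its chord set is $Q_j=M\cap E(G_j)$, and its spokes are the halves of $s_1,s_2,s_3$.

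By Theorem~\ref{thm:main}, each $(G_j,C_j)$ has a proper $4$-cover $\{M^j_1,M^j_2,M^j_3,M^j_4\}$. In such a cover $M^j_4=Q_j$, and each spoke lies in $M^j_4$ together with exactly one of $M^j_1,M^j_2,M^j_3$, with the three spokes using three distinct indices. Relabel the indices $1,2,3$ in each cover so that the half of $s_i$ lies in $M^j_i\cap M^j_4$ for both $j=1,2$. This is possible because in each cover the assignment of spokes to indices is a bijection.

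Now glue the covers: set $M_k=M^1_k\cup M^2_k$ for $k=1,2,3,4$, identifying the two halves of each $s_i$.

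\begin{itemize}
\item $M_4=Q_1\cup Q_2\cup\{s_1,s_2,s_3\}=M$ is a perfect matching.
\item For $k\in\{1,2,3\}$, the only spoke in $M^1_k$ and in $M^2_k$ is the half of $s_k$. Since both halves are the same edge of $G$, the set $M_k$ contains $s_k$ once and saturates every vertex exactly once, so it is a perfect matching of $G$.
\item Every edge of $G$ lies in $G_1$ or $G_2$ (or is some $s_i$), so it is covered.
\end{itemize}

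Hence $\{M_1,M_2,M_3,M_4\}$ covers $G$ with four perfect matchings.

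The only delicate point is the relabelling step. We need the spoke-to-index correspondence in each proper $4$-cover to be a bijection, so that both sides can be synchronized by a permutation of $\{1,2,3\}$. This follows from the parity remark in the definition of a proper $4$-cover, that each spoke lies in a different $M_i\cap M_4$. Once that is in place, the gluing is routine.
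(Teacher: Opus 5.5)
Your proof is correct and takes the same route as the paper: cut along the three spokes into two Hamiltonian $3$-poles, apply Theorem~\ref{thm:main} to each, and glue the two covers. You also make explicit the relabelling step that the paper's one-line proof leaves implicit; it is justified because each $M_i$ with $i\le 3$ must contain an odd number of spokes, which makes the spoke-to-index map a bijection.
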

\begin{proof}
We split $G$ into two 3-poles; for each, apply Theorem~\ref{thm:main}, and combine the four perfect matchings on both 3-poles to form four perfect matchings covering all the edges of $G$.
\end{proof}

In 2015, it was shown in \cite{S15} that if the perfect matching index of a cubic graph is at most 4, then it has a cycle cover of total length $4/3\cdot|E(G)|$.

\begin{corollary}
Let $G$ be a nontrivial cubic graph with a $2$-factor of two odd circuits, whose complementary $1$-factor contains exactly three spokes (and any number of chords). Then $G$ has a cycle cover of total length $4/3\cdot|E(G)|$, and therefore it fulfils the $7/5$-conjecture of Alon and Tarsi.
\end{corollary}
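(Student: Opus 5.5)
The plan is to apply the previous corollary and then invoke the result of \cite{S15}. By the previous corollary, $G$ has perfect matching index at most $4$. Nontriviality guarantees in particular that $G$ is bridgeless, so the hypotheses of \cite{S15} are met.

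The cycle cover comes from the standard construction. Take perfect matchings $M_1,\dots,M_4$ covering $E(G)$. The complements $C_i=E(G)-M_i$ are cycles (even subgraphs), as noted in the Preliminaries. One forms suitable symmetric differences or pairings of the $M_i$, as in \cite{S15}, to obtain a family of cycles covering every edge. Counting gives total length $4/3\cdot|E(G)|$:
\begin{itemize}
\item each $M_i$ has $|E(G)|/3$ edges;
\item simply covered edges and doubly covered edges contribute in a balanced way, since at each vertex exactly one edge is doubly covered.
\end{itemize}
I would not re-derive this and would simply cite the theorem of \cite{S15} stated just before the corollary.

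For the $7/5$-conjecture, which asks for a cycle cover of length at most $7/5\cdot|E(G)|$ for bridgeless graphs, it suffices to note that $4/3<7/5$. The only step needing care is confirming that the hypotheses of \cite{S15} (bridgelessness, i.e.\ nontriviality) are satisfied. The word ``nontrivial'' in the statement is there precisely to ensure this, so I expect no real obstacle.
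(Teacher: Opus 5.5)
Your proposal is correct and follows the paper's route: the preceding corollary gives perfect matching index at most $4$, the result of \cite{S15} then yields a cycle cover of total length $\tfrac43|E(G)|$, and $4/3<7/5$ finishes. Two minor remarks: bridgelessness does not need the word ``nontrivial'', since every perfect matching of a cubic graph contains any bridge, so a graph with a bridge has no cover by perfect matchings at all; and the cycle cover you left vague is the four cycles $M_i\triangle D$ ($i=1,\dots,4$), where $D$ is the perfect matching of doubly covered edges, and these cover each edge of $E(G)-D$ once and each edge of $D$ twice.
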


We conclude the paper with the following conjecture.

\begin{conjecture}
Let $G$ be a cubic graph with a $2$-factor consisting of two circuits. Then the perfect matching index of $G$ is at most $4$ unless $G$ is the Petersen graph.
\end{conjecture}

\section*{Acknowledgements}
\noindent{}The authors of this article were partially supported
by the grant No.~APVV-23-0076 of the Slovak Research and
Development Agency. The first author was partially supported by the grant VEGA~2/0056/25 of the Slovak Ministry
of Education. The second author was partially 
supported by the grant VEGA~1/0173/25 of the Slovak Ministry of
Education.

\bibliographystyle{amsplain}
\bibliography{4cover}

\end{document}